\newcommand{\C}{\mathcal{C}}
\newcommand{\T}{\mathcal{T}}
\newcommand{\Z}{\mathbb{Z}}
\newcommand{\R}{\mathbb{R}}
\newcommand{\bfq}{$Q_{BF}(\OP{MCG}(\B\Sigma_g))$}
\newcommand{\B}[1]{{\mathbf #1}}
\newtheorem{theorem}{Theorem}[section]
\newtheorem{theorem*}{Theorem}
\newtheorem{lemma}[theorem]{Lemma}
\newtheorem*{question*}{Question}
\theoremstyle{definition}
\newtheorem*{remark*}{Remark}
\newtheorem*{remarks*}{Remarks}
\newtheorem*{corollary*}{Corollary}
\numberwithin{figure}{section}
\numberwithin{table}{section}
\numberwithin{equation}{section}
\def\B{\mathbf}
\newcommand{\OP}{\operatorname}
\begin{document}

\title[On the entropy norm on $\OP{Diff}(\B\Sigma_g)$]
{On the entropy norm on the group of 
diffeomorphisms of closed oriented surface}
\author{Michael Brandenbursky}
\address{Ben-Gurion University, Israel}
\email{brandens@math.bgu.ac.il}
\author{Arpan Kabiraj}
\address{Ben-Gurion University, Israel}
\email{kabiraj@post.bgu.ac.il}

\keywords{Entropy, conjugation-invariant norms, quasi-morphisms, mapping class groups}
\subjclass[2000]{Primary 53; Secondary 57}

\begin{abstract}
We prove that the entropy norm on the group of diffeomorphisms
of a closed orientable surface of positive genus is unbounded. 
\end{abstract}

\maketitle


\section{Introduction}\label{S:intro} 
Let $\B M$ be a smooth compact manifold with some fixed
Riemannian metric. Let $f\colon\B M\to\B M$ be a continuous function. 
Recall that the topological entropy of $f$ may be defined as follows.
Let $\B d$ be the metric on $\B M$ induced by some Riemannian metric. For $p\in\B N$ define a new metric
$\B d_{f,p}$ on $\B M$ by 
$$\B d_{f,p}(x,y)=\max_{0\leq i\leq p} \B d(f^i(x),f^i(y)).$$
Let $\B M_f(p,\epsilon)$ be the minimal number of $\epsilon$-balls in the $\B d_{f,p}$-metric that cover $\B M$.
The topological entropy $h(f)$ is defined by
$$h(f)=\lim_{\epsilon\to 0}\limsup_{p\to\infty}\frac{\log{\B M_f(p,\epsilon)}}{p},$$
where the base of $\log$ is two.
It turns out that $h(f)$ does not depend on the choice 
of Riemannian metric, see \cite{Bowen, Dinaburg}.

In this note we consider the case when $\B M$ is a closed oriented surface $\B\Sigma_g$ of genus $g$. Denote by
$\OP{Diff}(\B\Sigma_g)$ the group of orientation preserving diffeomorphisms of $\B\Sigma_g$. 
Let 
$$\OP{Ent}(\B\Sigma_g)\subset\OP{Diff}(\B\Sigma_g)$$ 
be the set of entropy-zero diffeomorphisms.
This set is conjugation invariant and it generates $\OP{Diff}(\B\Sigma_g)$, see Lemma \ref{L:gen}.  
In other words, a diffeomorphism of $\B\Sigma_g$ is a finite product of entropy-zero diffeomorphisms. 
One may ask for a minimal decomposition and this question
leads to the concept of the entropy norm defined by
$$
\|f\|_{\OP{Ent}}:=\min\{k\in\B N\,|\,f=h_1\cdots h_k,\,h_i\in\OP{Ent}(\B\Sigma_g)\}.
$$
It is the word norm associated with the generating set $\OP{Ent}(\B\Sigma_g)$.
This set is conjugation invariant, so is the entropy
norm. The associated bi-invariant metric is denoted by $\B d_{\OP{Ent}}$.
It follows from the work of Burago-Ivanov-Polterovich \cite{MR2509711}
and Tsuboi \cite{MR2523489,MR2874899} that for many manifolds all conjugation
invariant norms on $\OP{Diff}(\B M)$ are bounded. Hence the
entropy norm is bounded in those cases. In particular, it is bounded in case $g=0$.

Entropy metric may be defined in the same way on the group $\OP{Ham}(\B\Sigma_g)$ 
of Hamiltonian diffeomorphisms of $\B\Sigma_g$, and on groups $\OP{Diff}(\B\Sigma_g, \OP{area})$ 
and $\OP{Diff}_0(\B\Sigma_g, \OP{area})$. It is related to the autonomous metric
 \cite{MR3391653, MR3044593, BKS, 1405.7931, MR2104597}.
Recently, the first author in collaboration with Marcinkowski showed that the entropy metric is unbounded 
on groups: $\OP{Ham}(\B\Sigma_g)$, $\OP{Diff}_0(\B\Sigma_g, \OP{area})$ 
and on $\OP{Diff}(\B\Sigma_g, \OP{area})$, see \cite{BM}. On the other hand, it is not known, 
and seems to be a difficult problem, whether $\OP{Diff}_0(\B\Sigma_g)$ is unbounded in case $g>0$.
In this work we discuss the case of $\OP{Diff}(\B\Sigma_g)$ where $g>0$. Our main result is the following

\begin{theorem*}\label{T:unboundedness}
Let $\B\Sigma_g$ be a closed oriented Riemannian surface of positive genus.
Then the diameter of $(\OP{Diff}(\B\Sigma_g), \B d_{\OP{Ent}})$ is infinite.
\end{theorem*}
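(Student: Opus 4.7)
The plan is to carry out the usual homogeneous quasi-morphism vs.\ word-norm argument, executed after pushing the problem from $\OP{Diff}(\B\Sigma_g)$ down to the mapping class group. Write $\pi\colon\OP{Diff}(\B\Sigma_g)\to\OP{MCG}(\B\Sigma_g)$ for the natural projection. Any homogeneous quasi-morphism $\phi$ on $\OP{MCG}(\B\Sigma_g)$ pulls back to a homogeneous quasi-morphism $\tilde\phi=\phi\circ\pi$ on $\OP{Diff}(\B\Sigma_g)$ with the same defect. If I can produce such a $\phi$ that is uniformly bounded on the image $\pi(\OP{Ent}(\B\Sigma_g))$ (say by a constant $C$) and unbounded on $\OP{MCG}(\B\Sigma_g)$, then the standard inequality
$$\|f\|_{\OP{Ent}}\geq\frac{|\tilde\phi(f)|-D(\tilde\phi)}{C+D(\tilde\phi)}$$
forces the entropy norm to be unbounded.

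Two inputs drive the construction. First, a theorem of Thurston (with a detailed proof by Fathi--Shub) asserts that $h(f)\geq\log\lambda(\pi(f))$, where $\lambda(\psi)\geq 1$ denotes the maximum dilatation among the pseudo-Anosov pieces in the Nielsen--Thurston decomposition of $\psi$. Hence if $f\in\OP{Ent}(\B\Sigma_g)$ then $\pi(f)$ has no pseudo-Anosov component, and is therefore either periodic or reducible with all pieces periodic. Second, by Masur--Minsky and Bowditch, $\OP{MCG}(\B\Sigma_g)$ acts on the Gromov hyperbolic curve complex $\C(\B\Sigma_g)$ with pseudo-Anosov elements acting as loxodromic WPD isometries and every other element having bounded orbits (periodic classes trivially, reducible classes because they permute the components of a fixed multicurve). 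The Bestvina--Fujiwara counting construction applied to a long subword of a pseudo-Anosov axis then provides a homogeneous quasi-morphism $\phi$ on $\OP{MCG}(\B\Sigma_g)$ which is nonzero on that pseudo-Anosov element. A standard observation shows that $\phi\equiv 0$ on every element acting elliptically on $\C(\B\Sigma_g)$: if $\{h^n\cdot x_0\}$ is bounded then so is $\phi(h^n)$, and homogeneity forces $\phi(h)=\lim_n \phi(h^n)/n=0$. Combined with the first input, $\phi\equiv 0$ on $\pi(\OP{Ent}(\B\Sigma_g))$.

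To finish I choose a pseudo-Anosov mapping class $g_0$ with $\phi(g_0)\neq 0$ and lift it to some $f_0\in\OP{Diff}(\B\Sigma_g)$. For any decomposition $f_0^n=h_1\cdots h_k$ with $h_i\in\OP{Ent}(\B\Sigma_g)$, the quasi-morphism estimate gives
$$n\,|\phi(g_0)|=|\tilde\phi(f_0^n)|\leq\sum_i|\tilde\phi(h_i)|+(k-1)D(\phi)=(k-1)D(\phi),$$
so $\|f_0^n\|_{\OP{Ent}}\to\infty$ and the diameter is infinite. The main obstacle, if one wants a fully self-contained exposition, is verifying the vanishing of $\phi$ on \emph{all} reducible mapping classes rather than merely on those that literally fix a vertex of $\C(\B\Sigma_g)$; this uses the fact that a reducible class permutes the components of its canonical reduction system, so some iterate fixes every component and the original orbit is bounded. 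In the case $g=1$ one replaces $\C(\B\Sigma_1)$ by the Farey graph, but the Bestvina--Fujiwara mechanism applies verbatim to $\OP{SL}(2,\Z)$.
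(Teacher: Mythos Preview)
Your proposal is correct and follows the same overall strategy as the paper: pull back a Bestvina--Fujiwara homogeneous quasimorphism along $\pi\colon\OP{Diff}(\B\Sigma_g)\to\OP{MCG}(\B\Sigma_g)$, use Thurston's inequality $h(f)\geq\log\lambda(\pi(f))$ to see that entropy-zero diffeomorphisms project to mapping classes with no pseudo-Anosov piece, and conclude via Lemma~\ref{L:unb}.

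The one noteworthy difference is how the vanishing on $\pi(\OP{Ent}(\B\Sigma_g))$ is established. The paper proves the stronger Lipschitz estimate $|\psi([f])|\leq C_\psi K\,H(f)$ by comparing translation lengths in $\C(\B\Sigma_g)$ and $\T(\B\Sigma_g)$ via the systole map and then invoking $\tau_\T([f])=\log\lambda_{[f]}=H(f)$. You bypass Teichm\"uller space entirely: since any non-pseudo-Anosov class has a bounded orbit on $\C(\B\Sigma_g)$, the counting quasimorphism $\psi_\omega$ is bounded along its powers and the homogenization vanishes. Your route is shorter and uses fewer external inputs; the paper's route yields the extra quantitative statement that every $\psi\in Q_{BF}$ is Lipschitz with respect to topological entropy, which is of independent interest. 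For $g=1$ the paper quotes the Polterovich--Rudnick quasimorphism on $\OP{SL}_2(\Z)$ vanishing on parabolics, while you run Bestvina--Fujiwara on the Farey graph; these are essentially the same construction.
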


\textbf{Remarks.}
\begin{itemize}
\item The above theorem holds for non-sporadic surfaces with punctures. The proof is exactly the same.

\item In \cite{BM} the first author in collaboration with Marcinkowski showed that 
the diameter of $(\OP{Diff}(\B\Sigma_g, \OP{area}), \B d_{\OP{Ent}})$ is infinite. 
Our proof of Theorem \ref{T:unboundedness}, which is simpler than the one given in \cite{BM}, is applicable to the case 
of $\OP{Diff}(\B\Sigma_g, \OP{area})$.

\item It would be interesting to know whether the entropy metric, or the autonomous metric are unbounded 
on $\OP{Diff}_0(\B\Sigma_g)$ in case $g>0$.
\end{itemize} 

\subsection* {Acknowledgments.}
First author was partially supported by Leverhulme Trust Grant RPG-2017-159.
Second author was partially supported by the center of advanced studies at
Ben Gurion University, by GIF-Young Grant number I-2419-304.6/2016, 
by ISF Grant number 2095/15 and by DST-INSPIRE, India. 
He wishes to express his gratitude to the center for the support and excellent working conditions.

\section{Preliminaries}\label{S:preliminaries} 
Let us start with the following
\begin{lemma}\label{L:gen}
Let $\B\Sigma_g$ be a closed oriented surface of genus $g$. Then $\OP{Diff}(\B\Sigma_g)$ is generated
by the set $\OP{Ent}(\B\Sigma_g)$ of entropy zero diffeomorphisms.
\end{lemma}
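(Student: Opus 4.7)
The plan is to exploit the short exact sequence
\[
1 \longrightarrow \OP{Diff}_0(\B\Sigma_g) \longrightarrow \OP{Diff}(\B\Sigma_g) \longrightarrow \OP{MCG}(\B\Sigma_g) \longrightarrow 1,
\]
and exhibit, on each side, a generating set whose elements are entropy-zero diffeomorphisms. Any $f\in\OP{Diff}(\B\Sigma_g)$ then factors as (lift of a product of generators of the quotient) times (element of the identity component), each factor of which has already been written as a product of elements of $\OP{Ent}(\B\Sigma_g)$.

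For the quotient I would invoke the Dehn--Lickorish theorem: $\OP{MCG}(\B\Sigma_g)$ is generated by a finite collection of Dehn twists. A smooth representative of a Dehn twist $T_\gamma$ can be chosen supported in a thin annular neighborhood of $\gamma$, where it acts by a standard twist $(r,\theta)\mapsto(r,\theta+\rho(r))$ with $\rho$ a monotone bump. The iterate $T_\gamma^p$ is a twist of the same annulus with slope $p\rho(r)$, and one checks directly that for fixed $\epsilon$ the quantity $\B M_{T_\gamma}(p,\epsilon)$ grows only polynomially in $p$, so $h(T_\gamma)=0$ and $T_\gamma\in\OP{Ent}(\B\Sigma_g)$.

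For the identity component I would argue by breaking up a smooth isotopy. Given $f\in\OP{Diff}_0(\B\Sigma_g)$, fix a smooth path $\{f_t\}_{t\in[0,1]}$ with $f_0=\OP{id}$ and $f_1=f$, choose a partition $0=t_0<t_1<\cdots<t_n=1$ so fine that every increment $\psi_i:=f_{t_i}\circ f_{t_{i-1}}^{-1}$ lies in a preassigned $C^\infty$-neighborhood $\mathcal U$ of the identity, and write $f=\psi_n\cdots\psi_1$. If $\mathcal U$ is small enough, the time-one exponential of smooth vector fields is surjective onto $\mathcal U$, so each $\psi_i=\phi_1^{X_i}$ is the time-one map of a smooth autonomous flow on $\B\Sigma_g$. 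A smooth flow on a closed surface has zero topological entropy (no Smale horseshoe can be embedded in a smooth flow on a surface, hence by Katok's theorem $h(\phi_1^{X_i})=0$), so each $\psi_i\in\OP{Ent}(\B\Sigma_g)$.

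The step I expect to be the main obstacle, or at least the one requiring a careful citation, is the zero-entropy property for time-one maps of smooth vector fields on a closed surface; everything else (Dehn--Lickorish, exponential-map surjectivity near the identity, connectedness of $\OP{Diff}_0$) is standard. If the reader is uncomfortable with this low-dimensional dynamics input, one can instead work a bit harder using fragmentation to reduce to diffeomorphisms supported in disks which are explicit time-one maps of vector fields supported in those disks, and invoke the same fact in the easier setting of flows on the disk.
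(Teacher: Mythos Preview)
Your treatment of the mapping class group is fine and in fact slightly cleaner than the paper's: Dehn--Lickorish plus the elementary entropy computation for a twist handles all $g\ge 1$ uniformly, whereas the paper singles out $g=1$ and uses finite-order generators of $\OP{SL}_2(\Z)$ there.

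The gap is in your handling of $\OP{Diff}_0(\B\Sigma_g)$. The assertion that the exponential map (vector field $\mapsto$ time-one flow) is surjective onto a $C^\infty$-neighbourhood of the identity is \emph{false} for diffeomorphism groups: already on $\OP{Diff}(S^1)$ there are diffeomorphisms arbitrarily $C^\infty$-close to the identity that are not the time-one map of any autonomous vector field (they admit no $n$-th roots, for instance). The same phenomenon persists on surfaces, so subdividing an isotopy into short pieces does not, by itself, produce autonomous time-one maps. Your fallback via fragmentation has the same problem: fragmentation yields diffeomorphisms supported in disks, but those pieces arise from \emph{time-dependent} flows and need not be time-one maps of autonomous fields supported in the disk; the local non-surjectivity of $\exp$ is not cured by shrinking the support. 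So the step you flagged as delicate (zero entropy of surface flows) is actually fine --- what fails is the step you called ``standard''.

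The paper sidesteps all of this with a one-line argument: $\OP{Ent}(\B\Sigma_g)$ is conjugation-invariant, so the subgroup it generates inside $\OP{Diff}_0(\B\Sigma_g)$ is normal; since $\OP{Diff}_0(\B\Sigma_g)$ is simple and contains non-trivial entropy-zero elements, that normal subgroup is everything. If you want to repair your approach rather than switch to simplicity, you would need a genuinely different generation statement for $\OP{Diff}_0$ by autonomous time-one maps --- this is known in the Hamiltonian/area-preserving setting (and underlies the autonomous-norm literature), but is not the trivial consequence of local surjectivity that you invoke.
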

\begin{proof}
The group $\OP{Diff}_0(\B\Sigma_g)$ is simple and hence is generated by entropy zero diffeomorphisms. It is enough to prove
the lemma in case $g>0$ since $\OP{Diff}(\B\Sigma_0)=\OP{Diff}_0(\B\Sigma_0)$.
In addition, Dehn twists have entropy zero and they generate $\OP{Diff}(\B\Sigma_g)/\OP{Diff}_0(\B\Sigma_g)$ in
case $g>1$. Hence in this case $\OP{Diff}(\B\Sigma_g)$ is generated by entropy zero diffeomorphisms.
In case $g=1$ we have that  
$$\OP{Diff}(\B\Sigma_1)/\OP{Diff}_0(\B\Sigma_1)\cong  \OP{SL}_2(\Z),$$ 
which in turn is generated by two matrices of finite order. Hence in this case $\OP{Diff}(\B\Sigma_g)$ 
is also generated by entropy zero diffeomorphisms.
\end{proof}

Let $\B\Sigma_g$ be a closed oriented surface of genus $g>1$.
\subsection{Translation length in Teichm{\"u}ller space} 
We denote the Teichm{\"u}ller space associated to $\B\Sigma_g$ by $\T(\B\Sigma_g)$. We equip $\T(\B\Sigma_g)$ with the Teichm{\"u}ller metric $\B d_{\T}$. 
Let $\OP{MCG}(\B\Sigma_g)$ be the mapping class group of $\B\Sigma_g$, i.e., $\OP{MCG}(\B\Sigma_g):=\OP{Diff}(\B\Sigma_g)/\OP{Diff}_0(\B\Sigma_g)$.
Note that it acts naturally on $\T(\B\Sigma_g)$. Let $[f]\in \OP{MCG}(\B\Sigma_g)$. 
The \emph{translation length} of $[f]$ in $\T(\B\Sigma_g)$ is defined by  
$$\tau_\T([f])=\underset{n\to\infty}{\lim}\frac{\B d_\T([f]^n(X),X)}{n}$$
where  $X\in\T(\B\Sigma_g)$. It is independent of the choice of $X$.

Let $[f]\in \OP{MCG}(\B\Sigma_g)$ be a pseudo-Anosov element with dilatation $\lambda_{[f]}$. 
According to Bers \cite{Bers} proof of Thurston's classification theorem of elements of mapping class group we have:
\begin{itemize}
\item there exists $X\in \T(\B\Sigma_g)$ such that $\tau_{\T}([f])=\B d_\T([f](X),X)$,\\
\item $\tau_{\T}([f])=\log(\lambda_{[f]})$.
\end{itemize}  

\subsection{Translation length in curve complex}
Given a surface $\B\Sigma_g$, we associate to it a simplicial complex as follows: its vertices are free homotopy classes of essential simple closed curves; 
a collection of $n+1$ vertices form an $n$-simplex whenever it can be realized by pairwise disjoint closed curves in $\B\Sigma_g$.
This complex is called the \emph{curve complex} of $\B\Sigma_g$ and is denoted by $\C(\B\Sigma_g)$. 
It is known that $\C(\B\Sigma_g)$ is connected. We consider the path metric on the $1$-skeleton of $\C(\B\Sigma_g)$ and denote it by $\B d_{\C}$.

Mapping class group $\OP{MCG}(\B\Sigma_g)$ acts by isometry on $\C(\B\Sigma_g)$. 
Given a mapping class $[f]\in\OP{MCG}(\B\Sigma_g)$, the \emph{translation length} of $[f]$ in $\C(\B\Sigma_g)$ is defined by
$$\tau_\C([f])=\underset{n\to\infty}{\lim}\frac{\B d_\C([f]^n(\alpha),\alpha)}{n}$$ 
where $\alpha$ is a vertex in $\C(\B\Sigma_g)$. 
The translation length is independent of $\alpha$ and is non-zero 
if and only if $[f]$ is a pseudo-Anosov mapping class \cite{MR1714338}. 

\subsection{Bestvina-Fujiwara quasimorphisms}
Let $G$ be a group. Recall that a function $\psi:G\rightarrow\R$ is 
called a quasimorphism if there exists $D>0$ such that
$$|\psi(ab)-\psi(a)-\psi(b)|<D$$ 
for all $a,b\in G$. A quasimorphism $\psi$ is called homogeneous 
if $\psi(a^n)=n\psi(a)$ for all $n\in \Z$ and all $a\in G$. 
Given a quasimorphism $\psi$ we can always 
construct a homogeneous quasimorphism $\overline{\psi}$ by setting 
$$\overline{\psi}(a):=\underset{p\to\infty}{\lim}\frac{\psi(a^p)}{p}$$

In \cite{MR1914565}, Bestvina and Fujiwara constructed infinitely many  
homogeneous quasimorphisms on $\OP{MCG}(\B\Sigma_g)$. Let us recall their construction.

Let $w$ be a finite oriented path in $\C(\B\Sigma_g)$. Denote the length of a path $\omega$ by $|\omega|$. 
For any finite path $\sigma$ in $\C(\B\Sigma_g)$, we define 
$$|\sigma|_\omega:= \{\text{the number of non-overlapping copies of $\omega$ in $\sigma$}\}.$$
Fix a positive integer $W<|\omega|$. Given any two vertices $\alpha,\beta\in\C(\B\Sigma_g)$, define 
$$c_{\omega,W}(\alpha,\beta)=\B d_{\C}(\alpha,\beta)-\inf (|\sigma|-W|\sigma|_\omega),$$ 
where the infimum is taken over all paths $\sigma$ between $\alpha$ and $\beta$. 

It turns out that the function $\psi_\omega:\OP{MCG}(\B\Sigma_g)\rightarrow \R$ defined by 
$$\psi_{\omega}([f])=c_{\omega,W}(\alpha,[f](\alpha))-c_{\omega^{-1},W}(\alpha,[f](\alpha)),$$  
where $\alpha$ is a vertex of $\C(\B\Sigma_g)$, is a quasimorphism  \cite{MR1914565}. 
The induced homogeneous quasimorphism is denoted by $\overline{\psi}_\omega$. 
We denote by  \bfq the space of homogeneous quasimorphisms on $\OP{MCG}(\B\Sigma_g)$ which is spanned by
Bestvina-Fujiwara quasimorphisms. In \cite{MR1914565} it is proved that  \bfq
is infinite dimensional whenever $\B\Sigma_g$ is a non-sporadic surface.

\section{Proof of the main result} Let us start with the following well-known 
\begin{lemma}\label{L:unb}
Let $G$ be a group generated by set $S$ and let $\psi:G\to\mathbb{R}$ be a non-trivial 
homogeneous quasimorphism which vanishes on $S$. Then the induced word norm $\|\cdot\|_S$
is unbounded.
\end{lemma}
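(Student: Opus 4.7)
The plan is a short, standard argument: bound $|\psi(g)|$ linearly in $\|g\|_S$, then use the non-triviality of $\psi$ together with its homogeneity to produce elements whose $\psi$-value grows without bound.

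First I would fix $D>0$ such that $|\psi(ab)-\psi(a)-\psi(b)|\leq D$ for all $a,b\in G$, and note that homogeneity together with the vanishing hypothesis implies $\psi(s)=0$ for every $s\in S\cup S^{-1}$, since $\psi(s^{-1})=-\psi(s)=0$. Now for any $g\in G$, write $g=s_1 s_2\cdots s_k$ with $s_i\in S\cup S^{-1}$ and $k=\|g\|_S$. Iterating the defect inequality yields
$$\Bigl|\psi(g)-\sum_{i=1}^{k}\psi(s_i)\Bigr|\leq (k-1)D,$$
and since each $\psi(s_i)=0$, this simplifies to $|\psi(g)|\leq (k-1)D\leq D\cdot\|g\|_S$.

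Next, since $\psi$ is non-trivial, choose $h\in G$ with $\psi(h)\neq 0$. Homogeneity gives $\psi(h^n)=n\psi(h)$, so applying the bound above to $g=h^n$ yields
$$\|h^n\|_S\geq \frac{|\psi(h^n)|}{D}=\frac{n\,|\psi(h)|}{D}\xrightarrow[n\to\infty]{}\infty.$$
Hence $\|\cdot\|_S$ is unbounded on $G$.

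There is really no serious obstacle here; the only point that requires a line of care is ensuring the hypothesis ``$\psi$ vanishes on $S$'' suffices to make $\psi$ vanish on $S^{-1}$ as well, which is immediate from homogeneity. Everything else reduces to the two observations that the quasimorphism defect accumulates at most linearly in the length of a word, and that homogeneity makes $|\psi|$ grow linearly along powers of any element on which $\psi$ is non-zero.
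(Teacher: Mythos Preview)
Your proof is correct and follows essentially the same argument as the paper: bound $|\psi(g)|$ by $D\cdot\|g\|_S$ via iterated application of the defect inequality and the vanishing on generators, then apply this to powers of an element on which $\psi$ is nonzero. You add the small but worthwhile remark that homogeneity forces $\psi$ to vanish on $S^{-1}$ as well, which the paper leaves implicit.
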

For the reader convenience we present its proof.

\begin{proof}
Let $g\in G$ such that $\psi(g)\neq 0$. Then $g=s_1\cdot\ldots\cdot s_{\|g\|_S}$. It follows that
$|\psi(g)|\leq \|g\|_S D_\psi$. Hence for each $n$ we get $\|g^n\|_S\geq n|\psi(g)|/D_\psi$ and the
proof follows.
\end{proof}

Now we prove Theorem \ref{T:unboundedness}.

\textbf{Case 1.} Let $g=1$ and denote $\B T:=\B\Sigma_1$. 
Let us consider homomorphism $F:\OP{Diff}(\B T)\to \OP{SL}_2(\Z)$
induced by the action of a diffeomorphism on the first homology $H_1(\B T,\Z)$. 
It is known that $F$ is surjective  (see \cite[Theorem 2.5]{FM}). 
By \cite[Theorem 1]{MR0650661}, $\log(\OP{spec}(f))\leq h(f)$ where $\OP{spec}(f) $
is the  modulus of the largest eigenvalue of $F(f)$. 
Therefore if $f$ has entropy zero then the modulus of the eigenvalues of $F(f)$ is at most one.

There are three types of elements in $\OP{SL}_2(\Z)$: \emph{periodic} (trace$<$2), 
\emph{parabolic} (trace=2) and \emph{hyperbolic} (trace$>$2). 
Therefore if $F(f)$ is hyperbolic then $\OP{spec}(f)>1$ and hence $h(f)>0$. 
Hence if $f$ is an entropy zero diffeomorphism, then $F(f)$ is either parabolic or periodic.

The value of any homogeneous quasimorphism on a periodic element is zero. 
It follows from the work of Polterovich and Rudnick \cite[Proposition 3]{MR2054053} that there exists a non-trivial 
homogeneous quasimorphism on $\OP{SL}_2(\Z)$ which vanishes on parabolic elements. 
Therefore there exists a non-trivial homogeneous quasimorphim on $\OP{Diff}(\B T)$ whose 
restriction on entropy-zero diffeomorphisms is zero. Hence by Lemma \ref{L:unb} 
the entropy norm on $\OP{Diff}(\B T)$ is unbounded.

\textbf{Case 2.} Let $g>1$. Given a homeomorphism $f$ of a surface $\B\Sigma_g$ define 
$$H(f)=\inf\{h(f'):f' \text{ is isotopic to }f\}$$ 
The topological entropy of $[f]\in\OP{MCG}(\B\Sigma_g)$ is defined to be $H(f)$.

\begin{lemma}\label{L:main}
Each quasimorphism in \bfq is Lipschitz with respect to the topological entropy. 
\end{lemma}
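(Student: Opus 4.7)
The plan is to show that any Bestvina--Fujiwara quasimorphism $\overline{\psi}_\omega$ satisfies a pointwise bound $|\overline{\psi}_\omega([f])|\leq C\cdot H([f])$ for every $[f]\in\OP{MCG}(\B\Sigma_g)$, with $C$ depending only on $\B\Sigma_g$ (and on $\omega,W$). This is the meaning of \emph{Lipschitz with respect to $H$} here, since $H$ is a non-negative conjugation-invariant function on the mapping class group; together with Lemma \ref{L:unb} it will eventually imply that $\overline{\psi}_\omega$ vanishes on entropy-zero diffeomorphisms.

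First, I would bound $\overline{\psi}_\omega$ by the translation length in the curve complex. Since $W<|\omega|$ we have $W|\sigma|_\omega\leq |\sigma|$ for every finite path $\sigma$, so taking $\sigma$ to be a geodesic between $\alpha$ and $[f](\alpha)$ in the infimum defining $c_{\omega,W}$ gives
$$0\leq c_{\omega,W}(\alpha,[f](\alpha))\leq \B d_{\C}(\alpha,[f](\alpha)),$$
and the same estimate holds for $\omega^{-1}$. Hence $|\psi_\omega([f])|\leq \B d_\C(\alpha,[f](\alpha))$. Applying this to $[f]^n$, dividing by $n$, and passing to the limit yields
$$|\overline{\psi}_\omega([f])|=\lim_{n\to\infty}\frac{|\psi_\omega([f]^n)|}{n}\leq \tau_\C([f]).$$

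It remains to bound $\tau_\C([f])$ by $H([f])$, which I would do case by case using the Nielsen--Thurston classification. If $[f]$ is periodic or reducible, then as recalled in the preliminaries $\tau_\C([f])=0$, and the inequality is trivial. If $[f]$ is pseudo-Anosov, the Bers realization theorem gives $H([f])=\log\lambda_{[f]}=\tau_\T([f])$, so the task reduces to proving $\tau_\C([f])\leq L\cdot\tau_\T([f])$ for some $L=L(\B\Sigma_g)$. This last step is the main obstacle. I would invoke the $\OP{MCG}$-equivariant coarsely Lipschitz projection $\pi\colon\T(\B\Sigma_g)\to\C(\B\Sigma_g)$ sending a marked hyperbolic structure to any of its systoles (Masur--Minsky). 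Applying $\pi$ to the Bers axis point $X\in\T(\B\Sigma_g)$ realizing $\tau_\T([f])$ yields
$$\B d_\C(\pi(X),[f]^n\pi(X))\leq L\cdot \B d_\T(X,[f]^n X)+O(1),$$
and dividing by $n$ and letting $n\to\infty$ gives $\tau_\C([f])\leq L\cdot\tau_\T([f])$. Composing with the bound from the previous paragraph produces $|\overline{\psi}_\omega([f])|\leq L\cdot H([f])$, which is the desired Lipschitz estimate.
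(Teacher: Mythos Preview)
Your proof is correct and follows essentially the same route as the paper: bound the homogenized quasimorphism by $\tau_\C$, then use the coarsely Lipschitz systole map to bound $\tau_\C$ by $\tau_\T = \log\lambda_{[f]} = H(f)$. The only minor additions in the paper are an explicit passage to finite linear combinations (so as to cover all of \bfq, not just the generators $\overline{\psi}_\omega$) and the attribution of $H(f)=\log\lambda_{[f]}$ to Thurston rather than to Bers.
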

\begin{proof}
Let $\psi\in $\bfq. If $[f]$ is reducible then $\psi([f])=0$ for all $\psi\in$ \bfq. 
Therefore it is enough to consider only pseudo-Anosov elements of $\OP{MCG}(\B\Sigma_g)$.
Since ${\psi}\in $\bfq, then $\psi=\sum _i^k a_i\overline{\psi}_{w_i}$, where 
$a_1, \ldots, a_k \in\mathbb{R}$ and $w_1, \ldots, w_k$ are some paths in $\C(S)$. 
It follows from the definition of $\overline{\psi}_{w_i}$ that $\overline{\psi}_{w_i}([f])\leq\tau_{{\C}}([f])$ 
for each $[f]\in\OP{MCG}(\B\Sigma_g)$ and each $i\in\{1, \ldots ,k\}$. Therefore we have 
$$|\psi([f])|\leq (\sum _{i=1}^k |a_i|)\tau_{\C}([f]).$$
By setting $C_\psi:=\sum _{i=1}^k |a_i|$ we get $|\psi ([f])|\leq C_{\psi}\tau_{\C}([f])$.

Let $sys:\T(\B\Sigma_g)\to\C(\B\Sigma_g)$ be the systole function, 
i.e., $X\in\T(\B\Sigma_g)$ goes to a vertex in $\C(\B\Sigma_g)$ which corresponds to a simple closed curve of minimal length in $X$. 
By \cite{MR1714338} there exist $K,C>0$ such that for all $X,Y\in \T(\B\Sigma_g)$
$$\B d_{\C}(sys(X),sys(Y))\leq K\B d_{\T}(X,Y)+C.$$
It is immediate that $[f]^n(sys(X))=sys([f]^n(X))$ for every $[f]\in\OP{MCG}(\B\Sigma_g)$.

Let $[f]\in\OP{MCG}(\B\Sigma_g)$ be a pseudo-Anosov element with dilatation $\lambda_{[f]}$. 
It follows from Bers \cite{Bers} proof of Thurston's theorem that $\tau_{\T}([f])=\log\lambda_{[f]}$. 
Therefore
\begin{align*}
\frac{\tau_{\C}([f])}{\tau_{\T}([f])} & = \underset{n \to \infty}{\lim}\frac{\frac{\B d_{\C}(sys(X),[f]^n(sys(X)))}{n}}{\frac{\B d_{\T}(X,[f]^n(X))}{n}}\\
 & = \underset{n \to \infty}{\lim} \frac{\frac{\B d_{\C}(sys(X),sys([f]^n(X)))}{n}}{\frac{\B d_{\T}(X,[f]^n(X))}{n}}\\
 & \leq\underset{n \to \infty}{\lim}\frac{K\B d_{\T}(X,[f]^n(X))+C}{\B d_{\T}(X,[f]^n(X))}= K
\end{align*} 
Thus
$$\tau_{\C}([f])\leq K{\tau_{\T}([f])}.$$ 
It follows that for each $\psi\in$ \bfq we have
$$
|\psi([f])|\leq C_{\psi}\tau_{\C}([f])\leq C_{\psi}K\tau_{\T}([f])=C_{\psi}K\log\lambda_{[f]}.
$$
By Thuston's result \cite[Proposition 10.13]{MR3053012}, $\log\lambda_{[f]}=H(f)$. Hence
$$|\psi([f])|\leq C_{\psi}KH(f)$$
and the proof of the lemma follows.
\end{proof}
Let $\Pi:\OP{Diff}(\B\Sigma_g)\to\OP{MCG}(\B\Sigma_g)$ be the quotient map and let $\psi\in $\bfq.
It follows from the proof of Lemma \ref{L:main} that for each $f\in\OP{Diff}(\B\Sigma_g)$ we have
$$|\psi\Pi(f)|\leq C_{\psi}KH(f)\leq C_{\psi}Kh(f).$$
Hence for each non-trivial $\psi\in $\bfq  the homogeneous quasimorphism 
$$\psi\Pi:\OP{Diff}(\B\Sigma_g)\to\mathbb{R}$$
is non-trivial and Lipschitz with respect to the topological entropy. 
It follows that it vanishes on the set of entropy-zero diffeomorphisms.
Hence by Lemma \ref{L:unb}  the entropy norm on $\OP{Diff}(\B\Sigma_g)$ is unbounded.
\qed
\bibliography{bibliography}
\bibliographystyle{plain}

\end{document}